\documentclass[12pt, oneside, a4paper]{article}

\usepackage{amsmath}
\usepackage{amsfonts}
\usepackage{amssymb}
\usepackage{amsthm,mathrsfs}
\usepackage{enumerate}
\usepackage{graphicx}
\newtheorem{theorem}{Theorem}[section]
\newtheorem{lemma}[theorem]{Lemma}

\newtheorem{corollary}[theorem]{Corollary}

\theoremstyle{definition}
\newtheorem{remark}[theorem]{Remark}



\title{\textbf{A hook formula for eigenvalues of $k$-point fixing graph}}
\author{Mahdi Ebrahimi\footnote{ m.ebrahimi.math@ipm.ir}
 \\
 {\small\em  School of Mathematics, Institute for Research in Fundamental Sciences (IPM)},\\{\small\em P.O. Box: 19395--5746, Tehran, Iran}\\
\\
}
\date{}

\begin{document}

\maketitle


\begin{abstract}
Let $S_n$ denote the symmetric group on $n$ letters. The \textit{$k$-point fixing graph} $\mathcal{F}(n,k)$ is defined to be the graph with vertex set $S_n$ and two vertices $g,h$ of $\mathcal{F}(n,k)$ are joined by an edge, if and only if $gh^{-1}$ fixes exactly $k$ points. Ku, Lau and Wong [Cayley graph on symmetric group generated by elements fixing $k$ points, Linear Algebra Appl. 471 (2015) 405-426] obtained a recursive formula for the eigenvalues of $\mathcal{F}(n,k)$.
 In this paper, we use objects called excited diagrams  defined as certain generalizations of skew shapes and
 derive an explicit formula for the eigenvalues of Cayley graph $\mathcal{F}(n,k)$. Then we apply this formula and show that the eigenvalues of $\mathcal{F}(n,k)$ are in the interval $[\frac{-|S(n,k)|}{n-k-1}, |S(n,k)|]$, where $S(n,k)$ is the set of elements $\sigma$ of $S_n$ such that $\sigma$ fixes exactly $k$ points.
 \end{abstract}
\noindent {\bf{Keywords:}}  Cayley graph, eigenvalue, excited diagram, Symmetric group. \\
\noindent {\bf AMS Subject Classification Number:}  05A17, 05E10, 20C30.

\section{Introduction}
$\noindent$ For a graph $\Gamma$, the \textit{eigenvalues} of $\Gamma$ is the eigenvalues of its adjacency matrix. The study of eigenvalues of graphs is an important part of modern graph theory. In particular, eigenvalues of Cayley graphs have attracted increasing attention due to their prominent roles in algebraic graph theory and applications in many areas such as expanders \cite{192,279}, chemical graph theory \cite{377} and quantum computing \cite{58,365}.
Let $G$ be a finite group and $S$
 be an inverse closed subset of $G$ with $1 \notin S$.
  The \textit{Cayley graph} $\Gamma(G,S)$ is the graph which has the elements of
   $G$ as its vertices and two vertices $u,\nu \in G$
    are joined by an edge if and only if $\nu=au$, for some $a\in S$. A Cayley graph $\Gamma(G,S)$ is called \textit{normal} if $S$ is closed under conjugation with elements of $G$. Also $\Gamma(G,S)$ is called \textit{integral} if its eigenvalues are all integers.

 Suppose $k$ and $n$ are positive integers. For $k\leq n$, a \textit{$k$-permutation} of $[n]:=\{1,2,\dots,n\}$ is an injective function from $[k]$ to $[n]$. Let $1\leq r\leq k\leq n$. The \textit{$(n,k,r)$-arrangement graph} $A(n,k,r)$ has all the $k$-permutations of $[n]$ as vertices and two $k$-permutations are adjacent if they differ in exactly $r$ positions. Note that $A(n,k,r)$ is a regular graph \cite{94}. The family of the arrangement graphs $A(n,k,1)$ was first introduced in \cite{9} as an interconnection network model for parallel computation. A relation between the eigenvalues of $A(n,k,r)$ and certain Cayley graphs was given in \cite{94}.

Let $n$ be a positive integer. Given an integer $k$ with $0\leq k\leq n-1$, let $S(n,k)$ be the set of elements $\sigma$ of $S_n$ such that $\sigma$ fixes exactly $k$ points in $[n]$. The $k$-point-fixing graph is defined \cite{237} to be the Cayley graph $\mathcal{F}(n,k):=\Gamma(S_n, S(n,k))$, that is, two vertices $\sigma, \tau$ are adjacent if and only if  $\sigma \tau^{-1}$ fixes exactly $k$ points. Note that the $k$-point fixing graph is also a kind of arrangement graph, i.e., $\mathcal{F}(n,k)=A(n,n,n-k)$. Since $S(n,k)$ is closed under conjugation, all k-point-fixing graphs are integral \cite[Corollary 1.2]{94}. Ku, Lau and Wong \cite{237} obtained a recursive formula for the eigenvalues of $\mathcal{F}(n,k)$, and using this formula, they determined the signs of the eigenvalues of the 1-point-fixing graph $\mathcal{F}(n,1)$. In \cite{239}, they obtained exact values of some eigenvalues of $\mathcal{F}(n,1)$. Also Renteln \cite{338} gave several interesting formulas for the eigenvalues of $\mathcal{F}(n,0)$. In this paper, we wish to obtain an explicit formula for eigenvalues of $\mathcal{F}(n,k)$.

 It is well known that the eigenvalues of a normal Cayley graph  $\Gamma(G,S)$ can be expressed in terms of the irreducible characters of $G$ \cite[p.235]{eigen}.

\begin{theorem}\label{eigen}(\cite{2}, \cite{6}, \cite{17}, \cite{19})
The eigenvalues of a normal Cayley graph $\Gamma(G,S)$
are given by $\eta_\chi=\frac{1}{\chi(1)}\sum_{a\in S}\chi(a)$ where
 $\chi$ ranges over all the complex irreducible characters of $G$. Moreover,
  the multiplicity of $\eta_{\chi}$ is $\chi(1)^2$.
\end{theorem}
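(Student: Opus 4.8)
The plan is to realize the adjacency matrix of $\Gamma(G,S)$ as left multiplication by a single element of the group algebra $\mathbb{C}[G]$ and then to exploit the hypothesis that this element is central.

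First I would set up the dictionary between the graph and the group algebra. Identify the space of functions on the vertex set with $\mathbb{C}[G]$, and let $L$ denote the left regular representation, so that $g\in G$ acts by $L_g\colon v\mapsto gv$. Put $z:=\sum_{a\in S}a\in\mathbb{C}[G]$. Reading off the edge relation $v\sim u \iff v\in Su$ (equivalently $vu^{-1}\in S$), one checks directly that the adjacency operator $A$ equals $L_z=\sum_{a\in S}L_a$; here inverse-closedness of $S$ is what makes $A$ symmetric and makes the choice of left versus right regular representation immaterial.

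Next I would invoke centrality. Because $S$ is closed under conjugation, $gzg^{-1}=z$ for every $g\in G$, so $z\in Z(\mathbb{C}[G])$. By Schur's lemma, $z$ therefore acts on each complex irreducible $G$-module $V_\chi$ as a scalar $\eta_\chi\operatorname{id}_{V_\chi}$; taking traces gives $\chi(1)\eta_\chi=\operatorname{tr}(z\mid V_\chi)=\sum_{a\in S}\chi(a)$, i.e.\ $\eta_\chi=\frac{1}{\chi(1)}\sum_{a\in S}\chi(a)$. Finally, decompose the regular representation, $\mathbb{C}[G]\cong\bigoplus_\chi V_\chi^{\oplus\chi(1)}$ over all irreducible characters $\chi$ (Wedderburn's theorem, equivalently the decomposition of the regular representation of a finite group); the operator $A=L_z$ preserves this decomposition and acts on the $\chi$-isotypic block $V_\chi^{\oplus\chi(1)}$ as $\eta_\chi\operatorname{id}$. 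Hence the eigenvalues of $A$ are exactly the numbers $\eta_\chi$, and the multiplicity of $\eta_\chi$ is $\dim V_\chi^{\oplus\chi(1)}=\chi(1)^2$ (if several characters give the same value $\eta_\chi$, the multiplicities add). As a consistency check, $\sum_\chi\chi(1)^2=|G|$ equals the number of vertices, so we have accounted for the whole spectrum.

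I do not expect a genuine obstacle: the argument is a routine translation of a combinatorial object into representation-theoretic language. The only points deserving care are (i) matching conventions so that $A$ is literally $L_z$ and not its transpose or $L_{z^{-1}}$, which is harmless because $S=S^{-1}$, and (ii) recording that an operator which is scalar on each isotypic summand has precisely those scalars as eigenvalues with multiplicities equal to the summand dimensions, which is immediate once the block decomposition is in place.
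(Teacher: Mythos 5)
Your proof is correct and is precisely the classical argument (central element of the group algebra, Schur's lemma, decomposition of the regular representation) that the paper relies on by citation to Babai, Diaconis--Shahshahani, et al.; the paper itself gives no proof of this statement. Nothing further is needed.
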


A partition is a weakly decreasing finite sequence of positive integers $\lambda=(\lambda_1, \dots, \lambda_l)$. We call $|\lambda|=\lambda_1+\dots+\lambda_l$ the \textit{size} of $\lambda$, and $l=l(\lambda)$ the \textit{length} of $\lambda$. The notation $\lambda \vdash n$ is used for a partition $\lambda$ of a positive integer $n$. The \textit{diagram} of $\lambda$ is $[\lambda]=\{(i,j)|\;1\leq i\leq l(\lambda)$, $1\leq j\leq \lambda_i\}$. We call the elements of $[\lambda]$ the \textit{cells} of $\lambda$. For partitions $\mu$ and $\lambda$, we say that $mu$ is \textit{contained} in $\lambda$, $\mu\subseteq \lambda$, if $[\mu]\subseteq [\lambda]$. We say that $\lambda/\mu$ is a \textit{skew shape} of size $|\lambda/\mu|=|\lambda|-|\mu|$ and the diagram of $\lambda/\mu$ is $[\lambda/\mu]=[\lambda]\backslash [\mu]$.

 The \textit{conjugate} of a partition $\lambda$ is the partition $\lambda^\prime$ whose diagram is the transpose of $[\lambda]$; in other words, $\lambda^\prime_j=max \{i|\;\lambda_i\geq j\}$.
    The \textit{hook length} $h_{\lambda}(u):=\lambda_i-i+\lambda^\prime_j-j+1$ of a cell $u=(i,j)\in [\lambda]$ is the number of cells directly to the right  and directly below $u$ in $[\lambda]$.

   Excited diagrams  defined as certain generalizations of skew shapes play an important role in combinatorics and representation theory of symmetric groups.
    Excited diagrams were introduced by Ikeda and Naruse \cite{18}, and in a slightly different form independently by Kreiman \cite{29}, \cite{30} and Knutson, Miller and Young \cite{21}.
     Let $\lambda/ \mu$ be a skew partition and $D$ be a subset of the Young diagram of $\lambda$.
      A cell $u=(i,j)\in D$ is called \textit{active} if $(i+1,j), (i,j+1)$ and $(i+1,j+1)$ are all in $[\lambda]\backslash D$.
      Let $u$ be an active cell of $D$, define $\alpha_u(D)$ to be the set obtained by replacing $(i,j)$ in $D$ by $(i+1,j+1)$.
      We call this replacement an \textit{excited move}. An \textit{excited diagram} of $\lambda/ \mu$ is a subdiagram of $\lambda$ obtained from the Young diagram of $\mu$ after a sequence of excited moves on active cells.
     For example, $(2^3,1)/(1^2)$ has three excited diagrams $\{(1,1),(2,1)\},\{(1,1),(3,2)\}$ and $\{(2,2),(3,2)\}$.  The set of excited diagrams of $\lambda/\mu$ is denoted by $\varepsilon(\lambda/\mu)$.
      Now we are ready to present our main result.

\begin{theorem}\label{main}
The eigenvalues of $\mathcal{F}(n,k)$ are given by
$$\eta_{\lambda}(k):= \sum_{\substack{\mu \vdash n-k\\ \mu \subseteq \lambda}}[\frac{\sum_{t=0}^{n-k}(-1)^{n-k-t}\sum_{D\in \varepsilon(\mu/(t))}\prod_{u\in D}h_{\mu}(u)}{\prod_{u\in [\mu]}h_{\mu}(u)}\sum_{E\in \varepsilon(\lambda/\mu)}\prod_{u\in E}h_{\lambda}(u)],$$
where $\lambda$ ranges over partitions of $n$.
   Moreover, the multiplicity of $\eta_{\lambda} (k)$ is
    $(\frac{n!}{{\prod}_{u\in [\lambda]} h_{\lambda}(u)})^2$.
\end{theorem}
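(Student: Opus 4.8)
The plan is to begin from the character formula of Theorem~\ref{eigen} and to strip the permutations in $S(n,k)$ into a ``fixed part'' and a ``deranged part'', reducing each eigenvalue to dimensions of skew shapes; the Naruse hook length formula then rewrites those dimensions as the stated sums over excited diagrams.

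Concretely, since $S(n,k)$ is inverse-closed, avoids $1$ (as $k\le n-1$), and is closed under conjugation, $\mathcal{F}(n,k)$ is a normal Cayley graph, so Theorem~\ref{eigen} gives $\eta_\lambda(k)=\frac{1}{f^\lambda}\sum_{\sigma\in S(n,k)}\chi^\lambda(\sigma)$, where $\chi^\lambda$ is the irreducible character of $S_n$ indexed by $\lambda\vdash n$ and $f^\lambda=\chi^\lambda(1)=n!/\prod_{u\in[\lambda]}h_\lambda(u)$ by the hook length formula. Every $\sigma\in S(n,k)$ fixes a unique $k$-subset $A\subseteq[n]$ pointwise and deranges $[n]\setminus A$; grouping the sum by $A$ and using that $\chi^\lambda$ is a class function, I would obtain $\sum_{\sigma\in S(n,k)}\chi^\lambda(\sigma)=\binom{n}{k}\sum_{\tau\in D_{n-k}}\chi^\lambda(\tau)$, where $D_{n-k}$ is the set of derangements inside the Young subgroup $S_{n-k}\le S_n$ acting on the last $n-k$ points. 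Restricting $\chi^\lambda$ to $S_k\times S_{n-k}$ by the Littlewood--Richardson rule, and using the identity $\sum_{\nu\vdash k}c^\lambda_{\nu\mu}f^\nu=f^{\lambda/\mu}$, this becomes $\binom{n}{k}\sum_{\mu\vdash n-k,\ \mu\subseteq\lambda}f^{\lambda/\mu}\,d_\mu$ with $d_\mu:=\sum_{\tau\in D_{n-k}}\chi^\mu(\tau)$.

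Next I would evaluate $d_\mu$ by inclusion--exclusion: expanding the derangement indicator as $\prod_i(1-[\tau(i)=i])$ gives $d_\mu=\sum_{j\ge 0}(-1)^j\binom{n-k}{j}\sum_{\tau\in S_{n-k-j}}\chi^\mu(\tau)$, the inner group being the pointwise stabilizer of a fixed $j$-set; the same branching computation (only the trivial constituent survives summation over a subgroup, since $\sum_{\tau\in S_m}\chi^\rho(\tau)=m!\,[\rho=(m)]$) yields $\sum_{\tau\in S_{n-k-j}}\chi^\mu(\tau)=(n-k-j)!\,f^{\mu/(n-k-j)}$, and reindexing by $t=n-k-j$ gives $d_\mu=(n-k)!\sum_{t=0}^{n-k}(-1)^{n-k-t}f^{\mu/(t)}/(n-k-t)!$. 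Finally I would feed in the Naruse hook length formula $f^{\lambda/\mu}=\frac{|\lambda/\mu|!}{\prod_{u\in[\lambda]}h_\lambda(u)}\sum_{D\in\varepsilon(\lambda/\mu)}\prod_{u\in D}h_\lambda(u)$ twice: once relative to $[\lambda]$ (with $|\lambda/\mu|=k$) for the factor $f^{\lambda/\mu}$, and once relative to $[\mu]$ (with $|\mu/(t)|=n-k-t$) for each $f^{\mu/(t)}$ occurring inside $d_\mu$. Substituting these into $\eta_\lambda(k)=\frac{1}{f^\lambda}\binom{n}{k}\sum_\mu f^{\lambda/\mu}d_\mu$ and simplifying, all factorials cancel --- $\binom{n}{k}k!(n-k)!/n!=1$ together with the cancellation of each $(n-k-t)!$ against the one coming from $d_\mu$, and $f^\lambda\prod_{u\in[\lambda]}h_\lambda(u)=n!$ --- leaving exactly the double sum asserted in Theorem~\ref{main}. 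The multiplicity statement is then immediate from Theorem~\ref{eigen}: the multiplicity of $\eta_\lambda(k)$ equals $\chi^\lambda(1)^2=\bigl(n!/\prod_{u\in[\lambda]}h_\lambda(u)\bigr)^2$.

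The representation-theoretic ingredients --- the Littlewood--Richardson branching rule, the identity $\sum_\nu c^\lambda_{\nu\mu}f^\nu=f^{\lambda/\mu}$, and $\sum_{\tau\in S_m}\chi^\rho(\tau)=m!\,[\rho=(m)]$ --- are routine. The substantive point, and where the care lies, is twofold: recognizing that the combinatorics of $S(n,k)$ is governed entirely by the skew shapes $\lambda/\mu$ and $\mu/(t)$ (so that the Naruse formula applies at all), and then matching the two invocations of that formula, one relative to $[\lambda]$ and one relative to $[\mu]$, against the inclusion--exclusion indices so that every factorial cancels cleanly. As a sanity check one can bypass the $\mu$-layer by a single inclusion--exclusion on the fixed points of $\sigma$, which gives $\eta_\lambda(k)=\frac{n!}{k!\,f^\lambda}\sum_{r=0}^{n-k}(-1)^{n-k-r}f^{\lambda/(r)}/(n-k-r)!$, and then recover the stated form via the tableau-splitting identity $f^{\lambda/(t)}=\sum_{\mu\vdash n-k}f^{\mu/(t)}f^{\lambda/\mu}$; I would nevertheless present the two-step route, since it parallels the shape of the formula to be proved.
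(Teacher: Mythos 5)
Your proposal is correct and follows essentially the same route as the paper: apply Theorem~\ref{eigen}, factor out $\binom{n}{k}$ by grouping permutations according to their fixed $k$-set, restrict $[\lambda]$ to obtain $\sum_{\mu\vdash n-k}f^{\lambda/\mu}$ times a derangement character sum, and then invoke Naruse's formula (Lemma~\ref{nhlf}) twice, once for $f^{\lambda/\mu}$ relative to $[\lambda]$ and once for $f^{\mu/(t)}$ relative to $[\mu]$. The only difference is cosmetic: where the paper cites Renteln's formula (Lemma~\ref{rentel}, repackaged as Lemma~\ref{ziro}) for the inner quantity $\eta_\mu(0)$, you re-derive it via inclusion--exclusion on fixed points, and you use the Littlewood--Richardson restriction in place of the iterated branching rule of Lemma~\ref{product}; both substitutions are valid and make the argument slightly more self-contained.
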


\begin{remark}
The number of excited diagrams of skew shape $\lambda/ \mu$ is given by a determinant \cite[Corollary 3.7]{Mor1}, a polynomial in the parts of $\lambda$ and $\mu$.
\end{remark}

The \textit{complete transposition graph} (also known as the \textit{transposition network}) $T_n$ is the Cayley graph on $S_n$ with connection set consisting of all transpositions in $S_n$. Note that $T_n=\mathcal{F}(n,n-2)$.

\begin{corollary}\label{transposition}
For the transposition network $T_n$,
\begin{itemize}
\item[a)] The eigenvalues of $T_n$ are given by
$$\eta_{\lambda}(n-2)=\sum_{\substack{(i,i),(j,j+1)\in [\lambda]\\ i\leq j}}h_\lambda((i,i))h_\lambda((j,j+1))-\binom{n}{2},$$
where $\lambda$ ranges over partitions of $n$.
\item[b)] The multiplicity of an eigenvalue $m$ of $T_n$ is equal to
$$\sum\{(\frac{n!}{{\prod}_{u\in [\lambda]} h_{\lambda}(u)})^2|\;\lambda \vdash n,\;\sum_{\substack{(i,i),(j,j+1)\in [\lambda]\\ i\leq j}}h_\lambda((i,i))h_\lambda((j,j+1))=\binom{n}{2}+m\}.$$
\end{itemize}
\end{corollary}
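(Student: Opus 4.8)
The plan is to specialise Theorem \ref{main} to $k=n-2$, where $n-k=2$, so the only partitions $\mu\vdash n-k$ are $(2)$ and $(1,1)$. Writing
$$c_\mu:=\frac{\sum_{t=0}^{2}(-1)^{2-t}\sum_{D\in\varepsilon(\mu/(t))}\prod_{u\in D}h_\mu(u)}{\prod_{u\in[\mu]}h_\mu(u)}\qquad(\mu\in\{(2),(1,1)\}),$$
I would first compute these two scalars by hand. Both shapes have hook multiset $\{2,1\}$, the excited diagrams $\varepsilon(\mu/(t))$ for $t=0,1,2$ are trivial to list, and for $\mu=(1,1)$ the term $t=2$ contributes $0$ because $(2)\not\subseteq(1,1)$; this gives $c_{(2)}=\tfrac{1-2+2}{2}=\tfrac12$ and $c_{(1,1)}=\tfrac{1-2+0}{2}=-\tfrac12$ (equivalently $c_\mu=\tfrac{f^\mu}{2!}\,\eta_\mu(0)$ with $\eta_{(2)}(0)=1$, $\eta_{(1,1)}(0)=-1$). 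Hence
$$\eta_\lambda(n-2)=\tfrac12\!\!\sum_{E\in\varepsilon(\lambda/(2))}\prod_{u\in E}h_\lambda(u)\;-\;\tfrac12\!\!\sum_{E\in\varepsilon(\lambda/(1,1))}\prod_{u\in E}h_\lambda(u)=:\tfrac12 S_2-\tfrac12 S_{11}.$$

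Next I would identify the two families of excited diagrams explicitly. An excited move fixes the content $j-i$ of a cell, so in any $D\in\varepsilon(\lambda/(2))$ the image of $(1,1)$ lies on the main diagonal, say at $(i,i)$, and the image of $(1,2)$ on the first superdiagonal, say at $(j,j+1)$. A short induction on the sequence of excited moves shows that the relation $i\le j$ is preserved throughout (each move keeping the diagonal cell weakly above the superdiagonal one); conversely, given $1\le i\le j$ with $(i,i),(j,j+1)\in[\lambda]$, one first slides the superdiagonal cell down to $(j,j+1)$ and then the diagonal cell down to $(i,i)$, every intermediate cell staying inside $[\lambda]$ by monotonicity of the parts of $\lambda$. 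Therefore
$$\varepsilon(\lambda/(2))=\bigl\{\{(i,i),(j,j+1)\}:\,1\le i\le j,\ (i,i),(j,j+1)\in[\lambda]\bigr\},$$
and transposing (using $(2)'=(1,1)$ and $h_{\lambda'}(j,i)=h_\lambda(i,j)$) gives the analogous description of $\varepsilon(\lambda/(1,1))$ with $(j+1,j)$ in place of $(j,j+1)$. In particular $S_2=\sum_{i\le j,\;(i,i),(j,j+1)\in[\lambda]}h_\lambda((i,i))h_\lambda((j,j+1))$, which is precisely the sum appearing in part a).

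It remains to evaluate $S_2+S_{11}$. From the hook length formula together with the Naruse excited-diagram identity already used in the proof of Theorem \ref{main}, namely $\sum_{E\in\varepsilon(\lambda/\mu)}\prod_{u\in E}h_\lambda(u)=\frac{n!}{f^\lambda}\cdot\frac{f^{\lambda/\mu}}{(n-|\mu|)!}$ (where $f^\lambda=n!/\prod_{u\in[\lambda]}h_\lambda(u)$ is the number of standard Young tableaux of shape $\lambda$, and $f^{\lambda/\mu}$ that of the skew shape $\lambda/\mu$), I get $S_2=n(n-1)f^{\lambda/(2)}/f^\lambda$ and $S_{11}=n(n-1)f^{\lambda/(1,1)}/f^\lambda$. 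Since in a standard Young tableau of shape $\lambda$ (with $n\ge 2$) the entry $1$ occupies $(1,1)$ and $2$ occupies $(1,2)$ or $(2,1)$, deleting those two cells yields the bijection $f^\lambda=f^{\lambda/(2)}+f^{\lambda/(1,1)}$, so $S_2+S_{11}=n(n-1)=2\binom{n}{2}$. Hence $\eta_\lambda(n-2)=\tfrac12 S_2-\tfrac12\bigl(2\binom{n}{2}-S_2\bigr)=S_2-\binom{n}{2}$, which is a). Part b) is then immediate from the multiplicity clause of Theorem \ref{main}: the eigenvalue $\eta_\lambda(n-2)$ has multiplicity $(n!/\prod_{u\in[\lambda]}h_\lambda(u))^2$, so the multiplicity of a value $m$ of $T_n$ is the sum of these over all $\lambda\vdash n$ with $\eta_\lambda(n-2)=m$, and by a) that condition reads $\sum_{i\le j,\;(i,i),(j,j+1)\in[\lambda]}h_\lambda((i,i))h_\lambda((j,j+1))=\binom{n}{2}+m$.

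I expect the main obstacle to be the explicit determination of $\varepsilon(\lambda/(2))$: proving that the non-crossing relation $i\le j$ is an invariant of excited moves, and checking that the reverse construction never steps outside $[\lambda]$. The remaining steps are short bookkeeping with hook lengths and a direct application of the Naruse identity.
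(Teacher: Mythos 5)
Your proposal is correct and follows essentially the same route as the paper: specialize Theorem \ref{main} to $k=n-2$ to get $\eta_\lambda(n-2)=\tfrac12(S_2-S_{11})$, combine with $S_2+S_{11}=2\binom{n}{2}$ (your inline Naruse-plus-branching computation is exactly the $k=n-2$ case of the paper's Lemma \ref{good}), and identify $\varepsilon(\lambda/(2))$ with the pairs $\{(i,i),(j,j+1)\}$, $i\le j$. The only difference is that you supply the invariance/reachability argument for that last identification, which the paper asserts without proof.
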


Suppose $k$ and $n$ are non-negative integers with $k < n$. For every partition $\lambda \vdash n$, we define $M_k(\lambda):=max\{|\eta_\mu (0)|\;|\;\mu \vdash n-k\;and\;\mu\subseteq \lambda\}$. As another application of our main theorem, we can state the following result.

\begin{corollary}\label{baund}
For the Cayley graph $\mathcal{F}(n,k)$,
\begin{itemize}
\item[a)] If $\lambda\vdash n$, then $|\eta_\lambda(k)|\leq \binom{n}{k}M_k(\lambda)$.
\item[b)] The eigenvalues of $\mathcal{F}(n,k)$ are in the interval $[\frac{-|S(n,k)|}{n-k-1}, |S(n,k)|]$.
\item[c)] $\eta_{(n)}(k)=|S(n,k)|$ and $\eta_{(1^n)}(k)=-(n-k-1) \binom{n}{k}$.
\item[d)] If $k=n-2$ or $n-4$, then the least eigenvalue of $\mathcal{F}(n,k)$ is given by
$\eta=\frac{-|S(n,k)|}{n-k-1}$.
\item[e)] Suppose $\lambda=(m, 1^{n-m})$, for some positive integer $ m < n$. Then $\eta_\lambda(k)$ is equal to
$$\frac{(-1)^{n-k}\binom{n}{k}}{\binom{n-1}{m-1}}\sum_{s=max\{1,m-k\}}^{min\{m,n-k\}}[((-1)^s|S(s,0)|-\frac{n-k-s}{n-k}){\binom{n-k}{s}}{\binom{k}{m-s}}].$$
\end{itemize}
\end{corollary}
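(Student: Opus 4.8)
The plan is to first recast Theorem~\ref{main} in the compact form
\[ \eta_\lambda(k)=\frac{\binom{n}{k}}{f^\lambda}\sum_{\substack{\mu\vdash n-k\\ \mu\subseteq\lambda}} f^\mu\, f^{\lambda/\mu}\,\eta_\mu(0),\qquad (\star) \]
where $f^\nu$ (resp.\ $f^{\lambda/\mu}$) denotes the number of standard Young tableaux of shape $\nu$ (resp.\ $\lambda/\mu$), and then read off all five parts from $(\star)$. To get $(\star)$ from Theorem~\ref{main}: applying that theorem with $(n,k,\lambda)$ replaced by $(|\mu|,0,\mu)$, the outer sum collapses to $\mu$ itself, $\varepsilon(\mu/\mu)=\{[\mu]\}$ contributes $\prod_{u\in[\mu]}h_\mu(u)$ which cancels the denominator, and one obtains $\eta_\mu(0)=\sum_{t=0}^{|\mu|}(-1)^{|\mu|-t}\sum_{D\in\varepsilon(\mu/(t))}\prod_{u\in D}h_\mu(u)$; hence the bracketed summand attached to $\mu$ in Theorem~\ref{main} is exactly $\dfrac{\eta_\mu(0)}{\prod_{u\in[\mu]}h_\mu(u)}\sum_{E\in\varepsilon(\lambda/\mu)}\prod_{u\in E}h_\lambda(u)$. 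Now the Naruse hook length formula $f^{\lambda/\mu}=k!\sum_{D\in\varepsilon(\lambda/\mu)}\prod_{u\in[\lambda]\setminus D}h_\lambda(u)^{-1}$ (note $|\lambda/\mu|=k$) converts $\sum_{E\in\varepsilon(\lambda/\mu)}\prod_{u\in E}h_\lambda(u)$ into $\big(\prod_{u\in[\lambda]}h_\lambda(u)\big)f^{\lambda/\mu}/k!$, and the ordinary hook length formula $\prod_{u\in[\nu]}h_\nu(u)=|\nu|!/f^\nu$ for $\nu=\lambda,\mu$ finishes the reduction. (Alternatively $(\star)$ follows from Theorem~\ref{eigen} by restricting $\chi^\lambda$ along $S_{n-k}\times S_k\le S_n$, evaluating the $S_k$-factor at the identity, and using $\sum_{\nu\vdash k}c^\lambda_{\mu\nu}f^\nu=f^{\lambda/\mu}$.)

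Parts (a) and (c) are then immediate. Since $f^\mu f^{\lambda/\mu}\ge 0$ and $\sum_{\mu\vdash n-k,\,\mu\subseteq\lambda}f^\mu f^{\lambda/\mu}=f^\lambda$ (sort a standard tableau of $\lambda$ by the subshape filled with $1,\dots,n-k$), $(\star)$ gives $|\eta_\lambda(k)|\le\frac{\binom{n}{k}}{f^\lambda}M_k(\lambda)\sum_\mu f^\mu f^{\lambda/\mu}=\binom{n}{k} M_k(\lambda)$, which is (a). For (c), $\lambda=(n)$ forces $\mu=(n-k)$ with $f^{(n-k)}=f^{(n)/(n-k)}=1$, so $\eta_{(n)}(k)=\binom{n}{k}\eta_{(n-k)}(0)=\binom{n}{k}|S(n-k,0)|=|S(n,k)|$, where $\eta_{(m)}(0)=|S(m,0)|$ either because $(m)$ indexes the trivial character (whose eigenvalue is the valency of the derangement graph) or because the collapsed formula gives $\eta_{(m)}(0)=\sum_{i=0}^m(-1)^i m!/i!=|S(m,0)|$; and $\lambda=(1^n)$ forces $\mu=(1^{n-k})$, again with both tableau counts $1$, so $\eta_{(1^n)}(k)=\binom{n}{k}\eta_{(1^{n-k})}(0)$, where $\eta_{(1^{n-k})}(0)$ is the eigenvalue of the sign character of the derangement graph on $n-k$ points, namely $\sum_{\sigma\in S(n-k,0)}\operatorname{sgn}\sigma=(-1)^{n-k-1}(n-k-1)$, which yields the asserted value.

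For (b), $(\star)$ says $\eta_\lambda(k)/\binom{n}{k}$ is a weighted average (weights $f^\mu f^{\lambda/\mu}/f^\lambda\ge 0$ summing to $1$) of the numbers $\eta_\mu(0)$, $\mu\vdash n-k$; I would then invoke the known description of the spectrum of the derangement graph, namely that all eigenvalues of $\mathcal F(m,0)$ lie in $\big[-|S(m,0)|/(m-1),\,|S(m,0)|\big]$ (the left endpoint is the Ku--Wales bound on the least eigenvalue; see also~\cite{338}). With $m=n-k$ and multiplication by $\binom{n}{k}$ this is exactly $\eta_\lambda(k)\in\big[-|S(n,k)|/(n-k-1),\,|S(n,k)|\big]$, the right endpoint being clear anyway since $\mathcal F(n,k)$ is $|S(n,k)|$-regular. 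For (d): equality on the left in (b) forces $\eta_\mu(0)=-|S(n-k,0)|/(n-k-1)$ for every $\mu\vdash n-k$ with $\mu\subseteq\lambda$ (all such $\mu$ have positive weight); for $\lambda=(1^n)$ the only such $\mu$ is $(1^{n-k})$, and $\eta_{(1^{n-k})}(0)=(-1)^{n-k-1}(n-k-1)$ equals $-|S(n-k,0)|/(n-k-1)$ precisely when $n-k$ is even with $|S(n-k,0)|=(n-k-1)^2$, i.e.\ $n-k\in\{2,4\}$ (indeed $|S(2,0)|=1$ and $|S(4,0)|=9$, while $|S(3,0)|=2\ne 4$ and $|S(m,0)|>(m-1)^2$ for $m\ge 5$). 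Hence for $k=n-2$ and $k=n-4$ the value $\eta_{(1^n)}(k)=-|S(n,k)|/(n-k-1)$ is attained, so it is the least eigenvalue.

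Finally (e): for the hook $\lambda=(m,1^{n-m})$ the partitions $\mu\vdash n-k$ contained in $\lambda$ are precisely the hooks $\mu=(s,1^{n-k-s})$ with $\max\{1,m-k\}\le s\le\min\{m,n-k\}$, and one computes $f^{(s,1^{n-k-s})}=\binom{n-k-1}{s-1}$, $f^{(m,1^{n-m})}=\binom{n-1}{m-1}$, and $f^{(m,1^{n-m})/(s,1^{n-k-s})}=\binom{k}{m-s}$ (the skew shape splits into a horizontal strip of $m-s$ cells and a vertical strip of the remaining $k-(m-s)$ cells lying in disjoint rows and columns, so a standard filling is just a choice of which $m-s$ entries land in the horizontal strip). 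Substituting into $(\star)$, inserting the hook eigenvalue of the derangement graph $\eta_{(s,1^{n-k-s})}(0)=\tfrac{(-1)^{n-k}}{s}\big((-1)^s(n-k)|S(s,0)|-(n-k-s)\big)$ (which I would derive by identifying $\chi^{(s,1^{n-k-s})}$ with $\wedge^{\,n-k-s}$ of the standard representation of $S_{n-k}$ and doing inclusion--exclusion over fixed points, or take from~\cite{338}), and using $\binom{n-k-1}{s-1}/s=\binom{n-k}{s}/(n-k)$, the sum collapses to the stated expression. The real content lies in the passage to $(\star)$ — where the Naruse hook length formula does the essential work — after which parts (b)–(e) all reduce to facts about the $k=0$ case; I expect the fussiest points to be the derivation of the derangement-graph hook-eigenvalue formula needed in (e) (an inclusion–exclusion followed by a binomial summation), and the sharpness argument in (d) that isolates exactly $n-k\in\{2,4\}$.
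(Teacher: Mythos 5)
Your proposal follows essentially the same route as the paper. Your identity $(\star)$, $\eta_\lambda(k)=\frac{\binom{n}{k}}{f^\lambda}\sum_{\mu}f^\mu f^{\lambda/\mu}\eta_\mu(0)$, is literally an intermediate line in the paper's proof of Theorem \ref{main} (the paper then re-expands it via Naruse's formula), and your normalization $\sum_{\mu\vdash n-k,\,\mu\subseteq\lambda}f^\mu f^{\lambda/\mu}=f^\lambda$ is the $f$-language avatar of the paper's Lemma \ref{good}. Parts (a), (b), (d), (e) are then argued exactly as in the paper: (b) via Renteln's lower bound $-|S(n-k,0)|/(n-k-1)$ for the derangement graph applied to each $\eta_\mu(0)$ with the weights summing to $\binom{n}{k}$, (d) by exhibiting $(1^n)$ as an eigenvalue attaining the bound when $|S(n-k,0)|=(n-k-1)^2$, and (e) by the hook computation, where your closed form for $\eta_{(s,1^{n-k-s})}(0)$ is precisely what the paper derives inline from Lemma \ref{ziro}. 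These parts are correct.

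The one genuine problem is in part (c). Your own (correct) computation gives $\eta_{(1^n)}(k)=\binom{n}{k}\sum_{\sigma\in S(n-k,0)}\operatorname{sgn}\sigma=(-1)^{n-k-1}(n-k-1)\binom{n}{k}$, and you then assert that this ``yields the asserted value'' $-(n-k-1)\binom{n}{k}$; the two expressions agree only when $n-k$ is even. For odd $n-k$ it is the asserted value that is wrong, not your computation: for $n=3$, $k=0$ both derangements of $S_3$ are even, so $\eta_{(1^3)}(0)=2$, not $-2$ (equivalently, Lemma \ref{ziro} gives $-1+h_{(1^3)}((1,1))=2$). So the final step of your argument for (c) asserts a false equality; you should have flagged the sign discrepancy in the statement rather than absorb it. This does not propagate: part (d) only uses $n-k\in\{2,4\}$, which are even, and parts (a), (b), (e) are unaffected.
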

\section{Preliminaries}
$\noindent$   In this paper, all groups are assumed to be finite. We first state well-known results on the character theory of the symmetric groups; for a complete account, see \cite{GA}. We often represent a partition $\lambda$ by its \textit{Young tableau}, in which a cell $(i,j)\in [\lambda]$ is represented by a unit square in position $(i,j)$, and we again denote it by $[\lambda]$. It is well known that both the conjugacy classes of $\mathcal{S}_n$ and the irreducible characters of $\mathcal{S}_n$ are indexed by partitions $\lambda$ of $[n]$ (see \cite{GA}). The irreducible character indexed by $\lambda \vdash n$ may be identified with the Young tableau $[\lambda]$ of $\lambda$. The character value of $[\lambda]$ on the conjugacy class indexed by $\beta \vdash n$ is denoted by $[\lambda]\beta$.
 In representation theory of symmetric groups, the branching rule tells us how to restrict an ordinary irreducible representation from $S_n$ to $S_{n-1}$.
 \begin{lemma}\label{branching}\cite[Theorem 2.4.3]{GA}(branching rule)

If $\alpha=(\alpha_1,\alpha_2,\dots,\alpha_s )$ is a partition of $n$, then we have for the restriction of $[\alpha]$ to the stabilizer $S_{n-1}$ of the point $n$
$$[\alpha]\downarrow S_{n-1}=\sum_{\substack{i\\ \alpha_i>\alpha_{i+1}}}[\alpha^{i^-}],$$
where $[\alpha^{i^-}]$ is a diagram obtained  by taking a cell away from $i$'th row  in $[\alpha]$.
\end{lemma}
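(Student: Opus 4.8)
This is the classical branching rule for the symmetric group, so the plan is to recover it from standard facts about Specht modules; I will describe the route through Frobenius reciprocity and Pieri's rule, and sketch a constructive alternative.

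First I would dualize to a statement about induction. For $\beta\vdash n-1$, Frobenius reciprocity gives
$$\langle [\alpha]\downarrow S_{n-1},\,[\beta]\rangle_{S_{n-1}}=\langle [\alpha],\,[\beta]\uparrow^{S_n}\rangle_{S_n},$$
so it suffices to show that $[\beta]\uparrow^{S_n}=\sum_{\gamma}[\gamma]$, the sum running over the partitions $\gamma\vdash n$ whose diagram is obtained from $[\beta]$ by adjoining a single cell. Granting this, $\langle[\alpha],[\beta]\uparrow^{S_n}\rangle$ equals $1$ exactly when $[\alpha]$ is $[\beta]$ with one cell added and $0$ otherwise; fixing $\alpha$ and letting $\beta$ vary, the contributing $\beta$ are precisely the $\alpha^{i^-}$ with $\alpha_i>\alpha_{i+1}$, and this yields $[\alpha]\downarrow S_{n-1}=\sum_{i:\,\alpha_i>\alpha_{i+1}}[\alpha^{i^-}]$ with each summand of multiplicity one.

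For the induction identity I would use the characteristic map $\mathrm{ch}$, the graded ring isomorphism from $\bigoplus_m R(S_m)$ onto the ring of symmetric functions that sends $[\gamma]$ to the Schur function $s_\gamma$ and converts the induction product into ordinary multiplication. Embedding $S_{n-1}\times S_1$ in $S_n$ and noting that the trivial character of $S_1$ maps to $h_1=s_{(1)}$, one gets $\mathrm{ch}([\beta]\uparrow^{S_n})=s_\beta h_1$, and Pieri's rule $s_\beta h_1=\sum_\gamma s_\gamma$ (sum over $\gamma$ obtained from $\beta$ by adding one cell) is exactly what is required. A self-contained alternative avoids symmetric functions: the Specht module $[\alpha]$ has the polytabloid basis $\{e_T\}$ indexed by standard Young tableaux $T$ of shape $\alpha$, and since $n$ is the largest entry, its cell in $T$ is a removable corner, i.e.\ the end of a row $i$ with $\alpha_i>\alpha_{i+1}$; if $V_i$ denotes the span of those $e_T$ having $n$ in row $i$, one checks that the point stabilizer $S_{n-1}$ (permuting only $1,\dots,n-1$) preserves each $V_i$ and that deleting the cell of $n$ is an $S_{n-1}$-isomorphism $V_i\cong[\alpha^{i^-}]$, whence $[\alpha]\downarrow S_{n-1}=\bigoplus_i V_i\cong\bigoplus_i[\alpha^{i^-}]$.

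The real content in the first route is Pieri's rule together with the identification of induction with multiplication by $h_1$; in the constructive route the delicate point is verifying that the $S_{n-1}$-action stabilizes each $V_i$ and matches the Specht-module action on $[\alpha^{i^-}]$ — this is cleanest in Young's seminormal form, where the matrix of an adjacent transposition $(j,j+1)$ with $j\le n-2$ depends only on the axial distance between $j$ and $j+1$ in the tableau, which is unchanged by erasing the corner cell occupied by $n$. Since all of this is classical, it is enough in the paper to cite \cite[Theorem 2.4.3]{GA}.
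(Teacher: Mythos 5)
The paper offers no proof of this lemma at all --- it is quoted verbatim from James--Kerber \cite[Theorem 2.4.3]{GA} as a classical black box --- so there is nothing in the text to compare your argument against; what matters is only whether your sketch is sound, and it is. Your first route (Frobenius reciprocity to reduce to $[\beta]\uparrow^{S_n}=\sum_\gamma[\gamma]$, then the characteristic map sending induction product to multiplication and Pieri's rule $s_\beta h_1=\sum_\gamma s_\gamma$) is the standard symmetric-function proof and is complete in outline; since removable cells of $[\alpha]$ are exactly the ends of rows $i$ with $\alpha_i>\alpha_{i+1}$, the multiplicity-one conclusion follows. One caution on your constructive alternative: the span $V_i$ of standard polytabloids $e_T$ with $n$ in row $i$ is \emph{not} in general an $S_{n-1}$-submodule, because straightening $e_{\sigma T}$ via Garnir relations can move the entry $n$ to a different row; the correct statement at the level of Specht modules is a filtration whose successive quotients are the $[\alpha^{i^-}]$ (which suffices for the character identity, and splits over $\mathbb{C}$ by semisimplicity). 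You do flag exactly this as the delicate point and your proposed repair via Young's seminormal form --- where the matrices of the adjacent transpositions $(j,j+1)$, $j\le n-2$, are block-diagonal according to the cell occupied by $n$ because axial distances between $j$ and $j+1$ do not see that cell --- is a valid way to make the direct-sum version literally true. Either route is an acceptable proof of the cited result; for the purposes of this paper the citation alone is, as you say, enough.
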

A generalization of branching rule is as follows:
\begin{lemma}\label{product}
Suppose $m$ and $n$ are positive integers with $m\leq n$. Then for every $\lambda \vdash n$,
$$[\lambda]\downarrow S_{m}=\sum_{\substack{\mu \vdash m\\ \mu\subseteq \lambda}} f^{\lambda/\mu}[\mu].$$
\end{lemma}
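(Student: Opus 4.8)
The plan is to prove the identity by induction on $n-m$, iterating the branching rule of Lemma~\ref{branching}. Throughout I read $f^{\lambda/\mu}$ as the number of standard Young tableaux of the skew shape $\lambda/\mu$; equivalently, recording the cells that receive $1,2,\dots,|\lambda/\mu|$ in order, it is the number of saturated chains $\mu=\nu^{(0)}\subset\nu^{(1)}\subset\cdots\subset\nu^{(n-m)}=\lambda$ in Young's lattice in which each step adjoins a single cell. I will use this dictionary between standard skew tableaux and lattice chains freely.

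The base case $n=m$ is immediate: $[\lambda]\downarrow S_n=[\lambda]$, while the only $\mu\vdash n$ with $\mu\subseteq\lambda$ is $\mu=\lambda$, for which $f^{\lambda/\lambda}=1$ (the empty tableau). For the inductive step, assume the statement for every difference smaller than $n-m\geq 1$. Using transitivity of restriction along $S_m\subseteq S_{n-1}\subseteq S_n$ together with Lemma~\ref{branching}, I would write
$$[\lambda]\downarrow S_m=\bigl([\lambda]\downarrow S_{n-1}\bigr)\downarrow S_m=\sum_{\nu}\bigl([\nu]\downarrow S_m\bigr),$$
where $\nu$ ranges over the partitions of $n-1$ with $\nu\subseteq\lambda$ and $\lambda/\nu$ a single cell (these are exactly the $[\lambda^{i^-}]$ with $\lambda_i>\lambda_{i+1}$). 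Applying the induction hypothesis to each $[\nu]\downarrow S_m$ and gathering the coefficient of each $[\mu]$ with $\mu\vdash m$ yields
$$[\lambda]\downarrow S_m=\sum_{\substack{\mu\vdash m\\ \mu\subseteq\lambda}}\Bigl(\sum_{\nu}f^{\nu/\mu}\Bigr)[\mu],$$
the inner sum being over $\nu$ with $\mu\subseteq\nu\subseteq\lambda$ and $|\nu|=n-1$.

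It then remains to check the recursion $f^{\lambda/\mu}=\sum_{\nu}f^{\nu/\mu}$ over that same index set. For this I would argue bijectively: in a standard Young tableau $T$ of shape $\lambda/\mu$, the cell carrying the largest entry $n-m$ has no cell of $\lambda/\mu$ below or to its right, and since $\mu$ is a partition it cannot have a cell of $\mu$ there either, so it is a removable corner of $\lambda$ lying in $\lambda/\mu$; hence $\nu:=[\lambda]\setminus\{\text{that cell}\}$ is a partition with $\mu\subseteq\nu\subseteq\lambda$ and $|\nu|=n-1$, and deleting the cell turns $T$ into a standard Young tableau of shape $\nu/\mu$. This correspondence is clearly reversible, so summing over admissible $\nu$ accounts for all of $f^{\lambda/\mu}$, closing the induction. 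Equivalently, one can sidestep the recursion entirely: iterate Lemma~\ref{branching} exactly $n-m$ times to obtain $[\lambda]\downarrow S_m=\sum[\nu^{(0)}]$ summed over all saturated chains $\nu^{(0)}\subset\cdots\subset\nu^{(n-m)}=\lambda$, then invoke the chain description of $f^{\lambda/\mu}$ recorded above. I do not anticipate a real obstacle; the only delicate point is the bookkeeping — confirming that the partitions produced by repeated application of the branching rule are precisely "all $\nu\vdash n-1$ with $\nu\subseteq\lambda$" at each stage, and that the chain-versus-tableau dictionary for $f^{\lambda/\mu}$ is applied consistently throughout.
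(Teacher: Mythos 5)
Your proof is correct and follows essentially the same route as the paper: induction on $n-m$, iterating the branching rule (Lemma~\ref{branching}) and absorbing the resulting multiplicities via a one-step recursion for $f^{\lambda/\mu}$. The only difference is cosmetic --- you peel off the top step (restrict $S_n$ to $S_{n-1}$ first and use $f^{\lambda/\mu}=\sum_{\nu}f^{\nu/\mu}$ over $\nu\vdash n-1$ with $\mu\subseteq\nu\subseteq\lambda$), whereas the paper peels off the bottom step (applies the induction hypothesis down to $S_{m+1}$ and then uses $f^{\lambda/\mu}=\sum_{\tilde\mu}f^{\lambda/\tilde\mu}$ over $\tilde\mu\vdash m+1$ covering $\mu$); you also supply the bijective justification of the recursion that the paper labels as obvious.
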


\begin{proof}
We do by induction on $k:=n-m$. If $k=1$, then using branching rule, it is clear. Let $t\geq 2$ be an integer. Now, we assume that the statement is true for every $k<t$ and we prove it for $k=t$. By induction hypothesis,
$$[\lambda]\downarrow S_{m+1}=\sum_{\substack{\tilde{\mu} \vdash m+1\\ \tilde{\mu}\subseteq \lambda}} f^{\lambda/\tilde{\mu}}[\tilde{\mu}].$$
Thus using branching rule,
\begin{align}
[\lambda]\downarrow S_{m}&=\sum_{\substack{\tilde{\mu} \vdash m+1\\ \tilde{\mu}\subseteq \lambda}} f^{\lambda/\tilde{\mu}}[\tilde{\mu}]\downarrow S_{m}\nonumber\\
&=\sum_{\substack{\tilde{\mu} \vdash m+1\\ \tilde{\mu}\subseteq \lambda}}[f^{\lambda/\tilde{\mu}}\sum_{\substack{i\\ \tilde{\mu}_i>\tilde{\mu}_{i+1}}}[\tilde{\mu}^{i^-}]].\nonumber
\end{align}
If $\mu\vdash m$ and $\mu \subseteq \lambda$, then obviously, $f^{\lambda/\mu}=\sum_{\mu \subseteq \tilde{\mu}} f^{\lambda/\tilde{\mu}}$. Hence
  $$[\lambda]\downarrow S_{m}=\sum_{\substack{\mu \vdash m\\ \mu\subseteq \lambda}} f^{\lambda/\mu}[\mu].$$
\end{proof}

 The standard Young tableaux  and skew shapes are central objects in enumerative and algebraic combinatorics.
A \textit{standard Young tableau} (or SYT for short) of shape $\lambda$ is a bijective map $T:[\lambda]\rightarrow \{1,\dots, |\lambda|\}$, $(i,j)\mapsto T_{ij}$,
satisfying $T_{ij}<T_{i,j+1}$ if $(i,j)$,$(i,j+1)\in [\lambda]$
and $T_{ij}<T_{i+1,j}$ if $(i,j)$,$(i+1,j)\in [\lambda]$. The number of SYT's of shape $\lambda$ is denoted by $f^{\lambda}$.
Analogously, if $\mu \subseteq \lambda$,
 we can define a \textit{standard Young tableau of skew shape} $\lambda/\mu$ as
 a map $T:[\lambda/\mu]\rightarrow \{1,\dots, |\lambda/ \mu|\}$, $(i,j)\mapsto T_{ij}$,
 satisfying $T_{ij}<T_{i,j+1}$ if $(i,j)$, $(i,j+1)\in [\lambda/\mu]$ and
  $T_{ij}<T_{i+1,j}$ if $(i,j)$, $(i+1,j)\in [\lambda/\mu]$.
   The number of SYT's of shape $\lambda/\mu$ is denoted by $f^{\lambda/ \mu}$.

\begin{lemma}\label{hlf}(\cite{hlf})
Let $\lambda$ be a partition of $n$. We have:
$$f^\lambda=\frac{n!}{\prod_{u\in [\lambda]} h(u)}.$$
This formula also gives dimensions of the irreducible representation corresponding to $\lambda$.
\end{lemma}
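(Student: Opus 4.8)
The plan is to prove the formula by induction on $n$, using the branching rule to reduce to a rational-function identity among hook lengths, and then to establish that identity by a probabilistic argument. Write $e^\lambda:=n!/\prod_{u\in[\lambda]}h_\lambda(u)$; I must show $f^\lambda=e^\lambda$. Taking dimensions in Lemma~\ref{branching} gives $f^\lambda=\sum_{c}f^{\lambda\setminus c}$, where $c$ runs over the corner cells of $\lambda$ (those whose removal leaves a partition), and $f^{(1)}=1=e^{(1)}$. So, by induction, it suffices to prove
$$\sum_{c}\frac{e^{\lambda\setminus c}}{e^\lambda}=1 .$$

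The first step is a direct comparison of hook lengths: since $c=(a,b)$ is a corner, removing it leaves the hook length of every cell of $[\lambda]$ unchanged except for the cells in row $a$ or column $b$ lying strictly to the northwest of $c$, each of whose hook lengths drops by exactly $1$ (and $h_\lambda(c)=1$). Hence
$$\frac{e^{\lambda\setminus c}}{e^\lambda}=\frac1n\prod_{i=1}^{a-1}\frac{h_\lambda(i,b)}{h_\lambda(i,b)-1}\prod_{j=1}^{b-1}\frac{h_\lambda(a,j)}{h_\lambda(a,j)-1},$$
so the whole problem reduces to showing that the sum of the right-hand side over all corners of $\lambda$ equals $1$. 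The second step is to prove this identity via the hook walk of Greene, Nijenhuis and Wilf: choose a cell of $[\lambda]$ uniformly at random, then repeatedly move from the current cell to a cell chosen uniformly among those strictly to its right in the same row or strictly below it in the same column, stopping on reaching a corner. One shows, by conditioning on the row and column ``projections'' of the starting cell and inducting on the set of rows and columns still available to the walk, that the walk terminates at a given corner $(a,b)$ with probability exactly $\frac1n\prod_{i<a}\frac{h_\lambda(i,b)}{h_\lambda(i,b)-1}\prod_{j<b}\frac{h_\lambda(a,j)}{h_\lambda(a,j)-1}$. Since the walk terminates at \emph{some} corner with probability $1$, summing over corners gives $1$, which is precisely the required identity; this completes the induction. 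The final assertion of the lemma is the standard fact that $f^\lambda=\dim S^\lambda$ for the Specht module $S^\lambda$.

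I expect the main obstacle to be the computation of the termination probability of the hook walk: the naive expansion is a sum over all admissible trajectories, and the real content is organizing this sum — by conditioning on the corner projections of the start cell and recursing on the available rows and columns — so that it collapses, via a telescoping of geometric-type partial sums, to the claimed product. As an alternative to the probabilistic step, one could instead derive the same rational identity from the Frobenius determinant formula $f^\lambda=n!\,\dfrac{\prod_{i<j}(\ell_i-\ell_j)}{\prod_i \ell_i!}$ with $\ell_i=\lambda_i+l(\lambda)-i$, after proving the purely combinatorial fact that the multiset of hook lengths in row $i$ of $[\lambda]$ is $\{1,2,\dots,\ell_i\}\setminus\{\ell_i-\ell_j:\ j>i\}$; in that route I would regard verifying this last combinatorial fact (and the Frobenius formula itself) as the crux.
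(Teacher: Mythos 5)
The paper does not prove this lemma at all --- it is quoted as the classical Frame--Robinson--Thrall hook length formula with a citation to \cite{hlf} --- so there is no in-paper argument to compare against; you are supplying a proof where the author supplies none. Your proposal is the standard and correct Greene--Nijenhuis--Wilf proof, and its skeleton is sound: the branching rule does give $f^\lambda=\sum_c f^{\lambda\setminus c}$ over corners $c$, your computation of $e^{\lambda\setminus c}/e^\lambda$ is right (only the hooks in the arm and leg of the corner change, each dropping by $1$, and $h_\lambda(c)=1$ absorbs the removed cell), and the hook walk does terminate at the corner $(a,b)$ with probability $\frac1n\prod_{i<a}\frac{h_\lambda(i,b)}{h_\lambda(i,b)-1}\prod_{j<b}\frac{h_\lambda(a,j)}{h_\lambda(a,j)-1}$, so summing to $1$ closes the induction. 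The one place where your write-up is a plan rather than a proof is exactly the step you flag: the GNW lemma that, conditional on the walk's row-projection set $I$ and column-projection set $J$ ending at $(a,b)$, the trajectory sum collapses to $\prod_{i\in I\setminus\{a\}}\frac{1}{h(i,b)-1}\prod_{j\in J\setminus\{b\}}\frac{1}{h(a,j)-1}$; this needs the short induction on $|I|+|J|$ to be written out, but the mechanism you describe is the correct one and there is no gap in the logic. Your alternative route through the Frobenius quotient-of-factorials formula and the fact that the hooks in row $i$ are $\{1,\dots,\ell_i\}\setminus\{\ell_i-\ell_j:\ j>i\}$ is equally valid and arguably shorter if one is willing to take the determinantal formula as input. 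Either version is a legitimate self-contained replacement for the citation.
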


\begin{lemma}\label{nhlf}(\cite{nhlf})( Naruse's formula)
Let $\lambda$, $\mu$ be partitions such that $\mu\subseteq \lambda$. We have:
$$f^{\lambda/\mu}=|\lambda/\mu|!\sum_{D\in \varepsilon(\lambda/\mu)}\prod_{u\in [\lambda]\backslash D}\frac{1}{h_\lambda(u)}.$$
\end{lemma}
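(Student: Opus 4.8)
The plan is to prove the formula by reducing it to a determinant and matching it against the classical determinantal formula for $f^{\lambda/\mu}$. First, observe that the case $\mu=\emptyset$ already recovers Lemma \ref{hlf}: there $\varepsilon(\lambda/\emptyset)=\{\emptyset\}$, so the right-hand side is $|\lambda|!\prod_{u\in[\lambda]}h_\lambda(u)^{-1}=f^\lambda$. This suggests comparing the right-hand side with the Jacobi--Trudi expansion of the skew Schur function. Applying the exponential specialization, which sends $h_k\mapsto 1/k!$ and $s_{\lambda/\mu}\mapsto f^{\lambda/\mu}/|\lambda/\mu|!$, to the Jacobi--Trudi identity $s_{\lambda/\mu}=\det(h_{\lambda_i-\mu_j-i+j})$ yields Aitken's formula
$$f^{\lambda/\mu}=|\lambda/\mu|!\,\det\!\left(\frac{1}{(\lambda_i-\mu_j-i+j)!}\right)_{1\le i,j\le \ell(\lambda)}.$$
It therefore suffices to prove the purely combinatorial identity
$$\sum_{D\in\varepsilon(\lambda/\mu)}\ \prod_{u\in[\lambda]\backslash D}\frac{1}{h_\lambda(u)}=\det\!\left(\frac{1}{(\lambda_i-\mu_j-i+j)!}\right)_{1\le i,j\le\ell(\lambda)}.$$

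Since $\prod_{u\in[\lambda]}h_\lambda(u)^{-1}=f^\lambda/|\lambda|!$ is a fixed constant independent of $D$, I would first rewrite the left-hand summand as $(f^\lambda/|\lambda|!)\prod_{u\in D}h_\lambda(u)$, so that the sum becomes a weighted enumeration of excited diagrams by the product of the $\lambda$-hook lengths of their cells. The second step is to encode excited diagrams as families of non-intersecting lattice paths. Following Kreiman and Ikeda--Naruse, the cells of an excited diagram $D$, namely the images of the cells of $\mu$ under excited moves, organize into a family of non-intersecting, up-right lattice paths inside $[\lambda]$, whose sources and sinks are determined by $\mu$ and $\lambda$, and this correspondence is a bijection onto $\varepsilon(\lambda/\mu)$. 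Weighting each path by the product of $h_\lambda$ over the cells of $D$ it carries makes the total path-family weight equal to $\prod_{u\in D}h_\lambda(u)$, and the non-intersecting condition translates exactly into the admissibility of excited moves.

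With this setup, the Lindstr\"om--Gessel--Viennot lemma converts the weighted sum over non-intersecting path families into a single determinant $\det(A_{ij})$, where $A_{ij}$ is the weighted generating function of single paths from the $i$-th source to the $j$-th sink. The final step is to compute the entries $A_{ij}$ explicitly and to identify the resulting determinant with Aitken's determinant above, after factoring out the constant $f^\lambda/|\lambda|!$, reconciling the size of the two matrices, and clearing the per-row hook-length normalizations by elementary row and column operations. I expect the main obstacle to be precisely this weight and dimension bookkeeping: verifying that the path bijection is weight-preserving and that the single-path sums assemble, up to such operations, into the matrix $\bigl(1/(\lambda_i-\mu_j-i+j)!\bigr)$. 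The way the hook lengths vary along a diagonal path is what makes the entry computation delicate; once the entries are matched, equality of the two determinants completes the proof.
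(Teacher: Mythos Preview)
The paper does not prove this lemma at all: it is quoted as a known result from \cite{nhlf}, and the remark immediately following it points the reader to \cite{Mor1} and \cite{european} for proofs. So there is no ``paper's own proof'' to compare against.

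Your outline follows essentially the algebraic proof of Morales--Pak--Panova \cite{Mor1}: reduce to Aitken's determinant via the exponential specialization of Jacobi--Trudi, encode excited diagrams as non-intersecting lattice path families (the Kreiman/Ikeda--Naruse bijection), apply Lindstr\"om--Gessel--Viennot, and match the single-path weight sums to the determinant entries. That strategy is sound and is one of the standard routes to Naruse's formula. What you have written, however, is a plan rather than a proof: you correctly flag that the entry computation and the row/column normalizations are where the work lies, but you do not actually carry them out. In particular, the claim that the LGV determinant entries equal $1/(\lambda_i-\mu_j-i+j)!$ after clearing constants requires an explicit evaluation of the weighted single-path sums, and this is not a triviality---it is precisely the content of the relevant section of \cite{Mor1}. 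If you want this to stand as a proof rather than a sketch, you need either to perform that computation or to cite it.
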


\begin{remark}
There are an algebraic and a combinatorial proof of Naruse's formula in \cite{Mor1}. Also Konvalinka \cite{european} gave a simple bijection that proves an equivalent recursive version of Naruse's formula.
\end{remark}

 There exists an interesting formula to evaluation the eigenvalues of $\mathcal{F}(n,0)$ as follows:

\begin{lemma}\label{rentel}(\cite[Theorem 3.2]{338})
The eigenvalues of $\mathcal{F}(n,0)$ are given by
$$\eta_\lambda(0)=\sum_{k=0}^n(-1)^{n-k}\frac{n!}{(n-k)!}\frac{f^{\lambda/(k)}}{f^\lambda},$$
where $\lambda$ runs over all partitions of $n$.
\end{lemma}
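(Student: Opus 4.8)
The plan is to reduce the character-sum expression for the eigenvalues furnished by Theorem~\ref{eigen} to the claimed skew form by inclusion-exclusion over fixed points. Since $S(n,0)$ is exactly the set of derangements, it is inverse-closed, conjugation-closed, and does not contain the identity, so $\mathcal{F}(n,0)$ is a normal Cayley graph and Theorem~\ref{eigen} applies. It gives
$$\eta_\lambda(0)=\frac{1}{\chi_\lambda(1)}\sum_{\sigma\in S(n,0)}\chi_\lambda(\sigma),$$
where $\chi_\lambda$ is the irreducible character indexed by $\lambda$. By Lemma~\ref{hlf} the denominator is $\chi_\lambda(1)=f^\lambda$, so the entire task is to evaluate the numerator $\sum_{\sigma\in S(n,0)}\chi_\lambda(\sigma)$.

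First I would write the indicator that $\sigma$ is a derangement as $\prod_{i=1}^n\bigl(1-\mathbf{1}[\sigma(i)=i]\bigr)$ and expand it, which yields the inclusion-exclusion identity
$$\sum_{\sigma\in S(n,0)}\chi_\lambda(\sigma)=\sum_{T\subseteq[n]}(-1)^{|T|}\sum_{\substack{\sigma\in S_n\\ \sigma \text{ fixes } T}}\chi_\lambda(\sigma).$$
For a fixed $j$-element set $T$, the permutations fixing $T$ pointwise form a subgroup isomorphic to $S_{n-j}$ acting on $[n]\setminus T$; since these subgroups are conjugate in $S_n$ and $\chi_\lambda$ is a class function, each of the $\binom{n}{j}$ sets contributes the same amount. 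Hence the numerator equals $\sum_{j=0}^n(-1)^j\binom{n}{j}\sum_{\tau\in S_{n-j}}\chi_\lambda(\tau)$.

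The crucial step is to identify $\sum_{\tau\in S_{n-j}}\chi_\lambda(\tau)$. The key observation is that $\frac{1}{|S_{n-j}|}\sum_{\tau\in S_{n-j}}\chi_\lambda(\tau)$ is the inner product $\langle \chi_\lambda\!\downarrow\! S_{n-j},\mathbf{1}\rangle$, that is, the multiplicity of the trivial character of $S_{n-j}$ in the restriction $\chi_\lambda\!\downarrow\! S_{n-j}$. By the generalized branching rule (Lemma~\ref{product}), $\chi_\lambda\!\downarrow\! S_{n-j}=\sum_{\mu\vdash n-j,\ \mu\subseteq\lambda}f^{\lambda/\mu}[\mu]$, and the trivial character of $S_{n-j}$ is the one-row partition $\mu=(n-j)$; therefore this multiplicity is exactly $f^{\lambda/(n-j)}$, giving $\sum_{\tau\in S_{n-j}}\chi_\lambda(\tau)=(n-j)!\,f^{\lambda/(n-j)}$.

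Substituting this and reindexing with $k=n-j$ turns the numerator into $\sum_{k=0}^n(-1)^{n-k}\binom{n}{k}k!\,f^{\lambda/(k)}=\sum_{k=0}^n(-1)^{n-k}\frac{n!}{(n-k)!}f^{\lambda/(k)}$, using $\binom{n}{k}k!=\frac{n!}{(n-k)!}$; dividing by $f^\lambda$ then produces the stated formula. I expect the main obstacle to be the bookkeeping in the crucial step — recognizing that the averaged character sum over the subgroup computes the trivial-isotypic multiplicity, and that this multiplicity is read off precisely from the $\mu=(n-j)$ term of the branching expansion of Lemma~\ref{product} — together with care that the set of permutations fixing $T$ genuinely is the Young subgroup $S_{n-j}$; the remaining algebra is the routine reindexing above.
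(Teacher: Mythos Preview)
The paper does not supply its own proof of this lemma; it is quoted verbatim from Renteln \cite[Theorem~3.2]{338} and used as a black box. Your argument is correct and self-contained within the paper's toolkit: the inclusion--exclusion expansion of the derangement indicator, the observation that $\frac{1}{(n-j)!}\sum_{\tau\in S_{n-j}}\chi_\lambda(\tau)=\langle\chi_\lambda\!\downarrow\!S_{n-j},\mathbf{1}\rangle$, and the identification of this multiplicity with $f^{\lambda/(n-j)}$ via Lemma~\ref{product} (the trivial character being $[(n-j)]$) all go through, with the convention $f^{\lambda/(k)}=0$ when $(k)\not\subseteq\lambda$ handled automatically by Lemma~\ref{product}.
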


\begin{lemma}\label{ku}(\cite[Theorem 7.1]{338})
The least eigenvalue of the adjacency matrix of the graph $\mathcal{F}(n,0)$ is given by
$$\eta=\frac{-|S(n,0)|}{n-1}.$$
\end{lemma}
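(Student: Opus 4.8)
The plan is to identify $-|S(n,0)|/(n-1)$ as one specific eigenvalue and then prove it is the smallest. Recall that $\mathcal{F}(n,0)$ is the derangement graph: it is regular of degree $|S(n,0)|$ on $n!$ vertices, and by Theorem \ref{eigen} its eigenvalues are the numbers $\eta_\lambda(0)$ with $\lambda\vdash n$; the top one is $\eta_{(n)}(0)=|S(n,0)|$ (trivial character). First I would exhibit the candidate value. For $\lambda=(n-1,1)$ the associated irreducible character is the standard one, with $\chi(\sigma)=\mathrm{fix}(\sigma)-1$ and $\chi(1)=n-1$. Since every element of $S(n,0)$ is a derangement, $\mathrm{fix}(\sigma)=0$, so by Theorem \ref{eigen},
\[
\eta_{(n-1,1)}(0)=\frac{1}{n-1}\sum_{\sigma\in S(n,0)}(\mathrm{fix}(\sigma)-1)=\frac{-|S(n,0)|}{n-1}.
\]
In particular the least eigenvalue is at most $-|S(n,0)|/(n-1)$, since it is at most every individual eigenvalue; no spectral machinery is needed for this half.

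The whole difficulty is the reverse inequality: $\eta_\lambda(0)\ge -|S(n,0)|/(n-1)$ for every $\lambda\vdash n$. I would reduce this to an explicit combinatorial estimate using Renteln's formula (Lemma \ref{rentel}), equivalently the $k=0$ case of Theorem \ref{main}, which writes $\eta_\lambda(0)$ as the normalized alternating sum $\sum_{t}(-1)^{n-t}\frac{n!}{(n-t)!}f^{\lambda/(t)}/f^\lambda$. The task then becomes to show this alternating sum of skew standard-tableau counts never drops below $-|S(n,0)|/(n-1)$, with equality exactly at $(n-1,1)$.

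It is worth flagging a tempting but ultimately insufficient route. One knows (Deza--Frankl / Cameron--Ku) that a maximum intersecting family of permutations, equivalently a maximum independent set of $\mathcal{F}(n,0)$, has size $(n-1)!$, realized by the cosets of a point stabilizer. Feeding $\alpha=(n-1)!$, $N=n!$, $d=|S(n,0)|$ into Hoffman's ratio bound yields exactly $\tau\le -|S(n,0)|/(n-1)$ for the least eigenvalue $\tau$; but this is the same direction as the easy half above and does not force $\tau$ to be any larger. Hence the ratio bound cannot close the argument, and the lower bound must be obtained by controlling all the $\eta_\lambda(0)$ directly.

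For that I would argue by induction on $n$, using the restriction/branching structure (Lemmas \ref{branching} and \ref{product}) to set up a recurrence expressing $\eta_\lambda(0)$ for $\lambda\vdash n$ through derangement-graph eigenvalues on $S_{n-1}$ and $S_{n-2}$, paralleling the derangement recurrence $D_n=(n-1)(D_{n-1}+D_{n-2})$ (this is the vein of the recursive formula in \cite{237}). Granting the inductive hypothesis that the extreme negative eigenvalue on $S_m$ is $-|S(m,0)|/(m-1)$, attained only at $(m-1,1)$, one propagates the bound upward and checks that the minimum over $\lambda\ne(n)$ is inherited by $(n-1,1)$. The main obstacle is precisely this propagation: one must keep the alternating sum from dipping below the target for the \emph{thin} shapes (long hooks and near-columns), where the signs in Renteln's formula fight hardest. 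This uniform sign and magnitude control is the technical heart, handled in \cite{338} via symmetric-function identities, and is where essentially all the work lies.
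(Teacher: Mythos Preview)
The paper does not prove this lemma; it is simply quoted from \cite[Theorem~7.1]{338}. So there is no in-paper argument to compare against, and your proposal must be judged on its own.

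Your easy direction is fine: the computation $\eta_{(n-1,1)}(0)=-|S(n,0)|/(n-1)$ via the standard character is correct, and your remark that Hoffman's ratio bound together with the Deza--Frankl/Cameron--Ku value $\alpha=(n-1)!$ yields only the same inequality $\tau\le -|S(n,0)|/(n-1)$ is also correct and well observed.

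The gap is in the hard direction. Your inductive scheme through branching and a derangement-type recurrence is plausible in spirit, but you do not actually execute the key step: showing that the lower bound propagates from $S_{n-1},S_{n-2}$ to $S_n$ uniformly over all $\lambda$. You call this ``the technical heart'' and then defer to \cite{338} for the sign/magnitude control. That is circular: Lemma~\ref{ku} \emph{is} the cited theorem of \cite{338}, so invoking \cite{338} to finish your proof is just quoting the result you are supposed to prove. Moreover, Renteln's actual argument in \cite{338} is not the branching induction you describe; he works with an explicit shifted symmetric function expression for $\eta_\lambda(0)$ and analyzes it directly to locate the minimum at $(n-1,1)$. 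If you want a self-contained proof along your inductive lines, you would need to supply the propagation estimate yourself (and handle the base cases and the shapes near the column, where the alternating sums are most delicate); as written, the proposal stops exactly where the real content begins.
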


\section{Proof of main results}
\noindent In this section, we wish to prove our main results. We begin by an observation an eigenvalues of $\mathcal{F}(n,0)$.

\begin{lemma}\label{ziro}
The eigenvalues of $\mathcal{F}(n,0)$ are given by
$$\eta_\lambda(0)=\sum_{k=0}^n(-1)^{n-k}\sum_{D\in \varepsilon(\lambda/(k))}\prod_{u\in D}h_{\lambda}(u)$$
where $\lambda$ runs over all partitions of $n$.
\end{lemma}

\begin{proof}
We have,
\begin{align}
\eta_\lambda(0)&=\sum_{k=0}^n(-1)^{n-k}\frac{n!}{(n-k)!}\frac{f^{\lambda/(k)}}{f^\lambda}\tag{Lemma \ref{rentel}}\\
&=\sum_{k=0}^n(-1)^{n-k}\frac{n!}{(n-k)!}\frac{|\lambda/(k)|!\sum_{D\in \varepsilon(\lambda/(k))}\prod_{u\in [\lambda]\backslash D}\frac{1}{h_\lambda(u)}}{\frac{n!}{\prod_{u\in [\lambda]} h_\lambda(u)}} \tag{Lemmas \ref{hlf}, \ref{nhlf}}\\
&=\sum_{k=0}^n(-1)^{n-k}\frac{n!}{(n-k)!}\frac{\frac{(n-k)!}{{\prod_{u\in [\lambda]} h_\lambda(u)}}\sum_{D\in \varepsilon(\lambda/(k))}\prod_{u\in  D}h_\lambda(u)}{\frac{n!}{\prod_{u\in [\lambda]} h_\lambda(u)}}\nonumber\\
&=\sum_{k=0}^n(-1)^{n-k}\sum_{D\in \varepsilon(\lambda/(k))}\prod_{u\in D}h_{\lambda}(u).\nonumber
\end{align}
\end{proof}

\noindent\textit{\bf Proof of Theorem \ref{main}}.
Let $\lambda \vdash n$. Then
\begin{align}
\eta_\lambda(k)&=\frac{1}{f^\lambda}\sum_{\beta \in S(n,k)}[\lambda]\beta\tag{Lemma \ref{eigen}}\\
&=\frac{\binom{n}{k}}{f^\lambda}\sum_{\beta \in S(n-k,0)}[\lambda]\beta\nonumber\\
&=\frac{\binom{n}{k}}{f^\lambda}\sum_{\beta \in S(n-k,0)}\sum_{\substack{\mu \vdash n-k\\ \mu\subseteq \lambda}} f^{\lambda/\mu}[\mu]\beta\tag{Lemma \ref{product}}\\
&=\frac{\binom{n}{k}}{f^\lambda}\sum_{\substack{\mu \vdash n-k\\ \mu\subseteq \lambda}}f^\mu f^{\lambda/\mu}(\frac{1}{f^\mu}\sum_{\beta \in S(n-k,0)} [\mu]\beta)\nonumber\\
&=\frac{\binom{n}{k}}{f^\lambda} \sum_{\substack{\mu \vdash n-k\\ \mu\subseteq \lambda}}f^\mu f^{\lambda/\mu} \eta_\mu(0)\nonumber
\end{align}
\begin{align}
&=\frac{\binom{n}{k}\prod_{u\in [\lambda]} h_\lambda(u)}{n!} \sum_{\substack{\mu \vdash n-k\\ \mu\subseteq \lambda}}(\frac{(n-k)!}{\prod_{u\in [\mu]} h_\mu(u)}) (|\lambda/\mu|!\sum_{E\in \varepsilon(\lambda/\mu)}\prod_{u\in [\lambda]\backslash E}\frac{1}{h_\lambda(u)})\eta_\mu(0)\tag{Lemmas \ref{hlf}, \ref{nhlf}}\\
&=\frac{\binom{n}{k}\prod_{u\in [\lambda]} h_\lambda(u)}{n!} \sum_{\substack{\mu \vdash n-k\\ \mu\subseteq \lambda}}[(\frac{(n-k)!}{\prod_{u\in [\mu]} h_\mu(u)}) (\frac{k!}{\prod_{u\in [\lambda]} h_\lambda(u)}\sum_{E\in \varepsilon(\lambda/\mu)}\prod_{u\in E}h_\lambda(u))\eta_\mu(0)]\nonumber\\
&= \sum_{\substack{\mu \vdash n-k\\ \mu\subseteq \lambda}}[(\frac{1}{\prod_{u\in [\mu]} h_\mu(u)}) (\sum_{E\in \varepsilon(\lambda/\mu)}\prod_{u\in E}h_\lambda(u))\eta_\mu(0)] \nonumber\\
&= \sum_{\substack{\mu \vdash n-k\\ \mu\subseteq \lambda}}[\frac{\sum_{t=0}^{n-k}(-1)^{n-k-t}\sum_{D\in \varepsilon(\mu/(t))}\prod_{u\in D}h_{\mu}(u)}{\prod_{u\in [\mu]} h_\mu(u)} \sum_{E\in \varepsilon(\lambda/\mu)}\prod_{u\in E}h_\lambda(u)]. \tag{Lemma \ref{ziro}}
\end{align}
 Moreover, using Theorem \ref{eigen} and Lemma \ref{hlf}, the multiplicity of $\eta_{\lambda} (k)$ is
    $(\frac{n!}{{\prod}_{u\in [\lambda]} h_{\lambda}(u)})^2$.
This completes the proof.\qed\\

In the sequel of this section, we require an interesting observation as follows:
\begin{lemma}\label{good}
Let $n$ and $k$ be non-negative integers with $k \leq n$. Then for every $\lambda\vdash n$,
$$\binom{n}{k}=\sum_{\substack{\mu \vdash n-k\\ \mu \subseteq \lambda}}\frac{\sum_{E\in \varepsilon(\lambda/\mu)}\prod_{u\in E}h_{\lambda}(u)}{\prod_{u\in [\mu]}h_{\mu}(u)}.$$
\end{lemma}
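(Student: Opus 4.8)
The plan is to recognize Lemma~\ref{good} as the ``total mass'' statement underlying the main theorem: the quantity $\sum_{E\in\varepsilon(\lambda/\mu)}\prod_{u\in E}h_\lambda(u)$ is, up to the normalization $\prod_{u\in[\mu]}h_\mu(u)$, exactly the coefficient $f^\mu f^{\lambda/\mu}$ that appears in the branching/restriction formula of Lemma~\ref{product}. So the cleanest route is to take $[\lambda]\downarrow S_{n-k}$ from Lemma~\ref{product}, evaluate both sides at the identity conjugacy class $\beta=1$ of $S_{n-k}$, and read off the resulting numerical identity $\dim[\lambda]=\sum_{\mu\vdash n-k,\,\mu\subseteq\lambda} f^{\lambda/\mu}\dim[\mu]$, i.e. $f^\lambda=\sum_\mu f^{\lambda/\mu}f^\mu$.

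Concretely, I would proceed as follows. First, start from $f^\lambda=\sum_{\substack{\mu\vdash n-k\\ \mu\subseteq\lambda}} f^{\lambda/\mu}f^\mu$, which follows by taking dimensions in Lemma~\ref{product} (equivalently, it is the classical identity that an SYT of shape $\lambda$ decomposes according to the subtableau occupied by $1,\dots,n-k$). Next, substitute the hook length formula (Lemma~\ref{hlf}) for $f^\lambda$ and for $f^\mu$, and Naruse's formula (Lemma~\ref{nhlf}) for $f^{\lambda/\mu}$, exactly as in the displayed computation in the proof of Theorem~\ref{main}: one gets
\begin{align}
\frac{n!}{\prod_{u\in[\lambda]}h_\lambda(u)}
&=\sum_{\substack{\mu\vdash n-k\\ \mu\subseteq\lambda}}
\Bigl(|\lambda/\mu|!\sum_{E\in\varepsilon(\lambda/\mu)}\prod_{u\in[\lambda]\setminus E}\frac{1}{h_\lambda(u)}\Bigr)
\frac{(n-k)!}{\prod_{u\in[\mu]}h_\mu(u)}\nonumber\\
&=\frac{k!\,(n-k)!}{\prod_{u\in[\lambda]}h_\lambda(u)}
\sum_{\substack{\mu\vdash n-k\\ \mu\subseteq\lambda}}
\frac{\sum_{E\in\varepsilon(\lambda/\mu)}\prod_{u\in E}h_\lambda(u)}{\prod_{u\in[\mu]}h_\mu(u)},\nonumber
\end{align}
where I used $|\lambda/\mu|=k$ and pulled $\prod_{u\in[\lambda]}h_\lambda(u)$ out of $\prod_{u\in[\lambda]\setminus E}\tfrac1{h_\lambda(u)}$ as $\bigl(\prod_{u\in[\lambda]}h_\lambda(u)\bigr)^{-1}\prod_{u\in E}h_\lambda(u)$. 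Finally, multiply both sides by $\prod_{u\in[\lambda]}h_\lambda(u)\big/(k!\,(n-k)!)$ to obtain $\binom{n}{k}=\sum_{\mu}\frac{\sum_{E}\prod_{u\in E}h_\lambda(u)}{\prod_{u\in[\mu]}h_\mu(u)}$, which is the claim.

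I do not anticipate a serious obstacle here; every manipulation is the same bookkeeping that already appears in the proof of Theorem~\ref{main}, so the only real content is the input identity $f^\lambda=\sum_\mu f^{\lambda/\mu}f^\mu$. The one point that deserves a word of care is the justification of that identity: it is immediate from Lemma~\ref{product} by comparing dimensions (taking the trace at the identity element), and one should also note the degenerate endpoints $k=0$ (where the sum has the single term $\mu=\lambda$, $\varepsilon(\lambda/\lambda)=\{\emptyset\}$, empty product $=1$, giving $\binom n0=1$) and $k=n$ (where $\mu=\varnothing$ is the only term and $\varepsilon(\lambda/\varnothing)=\varepsilon(\lambda)$, recovering $\binom nn = 1$ via Naruse's formula applied to $f^\lambda/\,\lvert\lambda\rvert! \cdot \lvert\lambda\rvert!$), so the statement is consistent at the boundary. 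Alternatively, if one prefers to avoid invoking Lemma~\ref{product}, the identity $f^\lambda=\sum_{\mu\subseteq\lambda,\,|\mu|=n-k}f^{\lambda/\mu}f^\mu$ can be proved directly by a bijection: an SYT of shape $\lambda$ is split into the SYT formed by the cells containing $1,\dots,n-k$ (a straight SYT of some shape $\mu$) and the SYT formed by the remaining cells (a skew SYT of shape $\lambda/\mu$).
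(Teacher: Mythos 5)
Your proposal is correct and matches the paper's own proof in all essentials: both rest on the identity $f^\lambda=\sum_{\mu\vdash n-k,\,\mu\subseteq\lambda}f^{\lambda/\mu}f^\mu$ from Lemma~\ref{product} combined with the hook length formula (Lemma~\ref{hlf}) and Naruse's formula (Lemma~\ref{nhlf}); the paper merely runs the same algebra in the opposite direction, starting from the sum $A$ and reducing it to $\frac{\binom{n}{k}}{f^\lambda}\sum_\mu f^{\lambda/\mu}f^\mu=\binom{n}{k}$.
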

\begin{proof}
suppose $A:=\sum_{\substack{\mu \vdash n-k\\ \mu \subseteq \lambda}}\frac{\sum_{E\in \varepsilon(\lambda/\mu)}\prod_{u\in E}h_{\lambda}(u)}{\prod_{u\in [\mu]}h_{\mu}(u)}$. Then
\begin{align}
A&=\sum_{\substack{\mu \vdash n-k\\ \mu \subseteq \lambda}}[\binom{n}{k}\frac{\prod_{u\in [\lambda]}h_{\lambda}(u)}{n!}\frac{(n-k)!}{\prod_{u\in [\mu]}h_{\mu}(u)}\frac{|\lambda/\mu|!}{\prod_{u\in [\lambda]}h_{\lambda}(u)}\sum_{E\in \varepsilon(\lambda/\mu)}\prod_{u\in E}h_{\lambda}(u)]\nonumber\\
&=\frac{\binom{n}{k}}{f^\lambda}\sum_{\substack{\mu \vdash n-k\\ \mu \subseteq \lambda}}f^{\lambda/\mu}f^\mu\tag{Lemmas \ref{hlf}, \ref{nhlf}}\\
&=\frac{\binom{n}{k}f^\lambda}{f^\lambda}=\binom{n}{k}.\tag{Lemma \ref{product}}
\end{align}
\end{proof}

\noindent\textit{\bf Proof of Corollary \ref{transposition}}
Let $\lambda \vdash n$. Then by Theorem \ref{main}
\begin{align}
\eta_\lambda(n-2)&= \sum_{\substack{\mu \vdash 2\\ \mu \subseteq \lambda}}[\frac{\sum_{t=0}^{2}(-1)^{2-t}\sum_{D\in \varepsilon(\mu/(t))}\prod_{u\in D}h_{\mu}(u)}{\prod_{u\in [\mu]}h_{\mu}(u)}\sum_{E\in \varepsilon(\lambda/\mu)}\prod_{u\in E}h_{\lambda}(u)]\nonumber\\
&= \frac{1}{2}(\sum_{E\in \varepsilon(\lambda/(2))}\prod_{u\in E}h_{\lambda}(u)-\sum_{E\in \varepsilon(\lambda/(1^2))}\prod_{u\in E}h_{\lambda}(u))
\tag{1}
\end{align}
Note that if $\varepsilon(\lambda/ \mu)=\emptyset$, then $\sum_{E\in \varepsilon(\lambda/\mu)}\prod_{u\in E}h_{\lambda}(u)=0$. Now using Lemma \ref{good},
\begin{align}
\binom{n}{2}&=\frac{1}{2}(\sum_{E\in \varepsilon(\lambda/(2))}\prod_{u\in E}h_{\lambda}(u)+\sum_{E\in \varepsilon(\lambda/(1^2))}\prod_{u\in E}h_{\lambda}(u)).\tag{2}
\end{align}
Hence equations (1) and (2) imply that
\begin{align}
\eta_\lambda(n-2)&=\sum_{E\in \varepsilon(\lambda/(2))}\prod_{u\in E}h_{\lambda}(u)-\binom{n}{2}\nonumber\\
&=\sum_{\substack{(i,i),(j,j+1)\in [\lambda]\\ i\leq j}}h_\lambda((i,i))h_\lambda((j,j+1))-\binom{n}{2}\nonumber
\end{align}
Therefore using Theorem \ref{main}, we are done.\qed\\

\noindent\textit{\bf Proof of Corollary \ref{baund}}
\begin{align}
a)\; |\eta_\lambda(k)|&= |\sum_{\substack{\mu \vdash n-k\\ \mu \subseteq \lambda}}[\frac{\sum_{t=0}^{n-k}(-1)^{n-k-t}\sum_{D\in \varepsilon(\mu/(t))}\prod_{u\in D}h_{\mu}(u)}{\prod_{u\in [\mu]}h_{\mu}(u)}\sum_{E\in \varepsilon(\lambda/\mu)}\prod_{u\in E}h_{\lambda}(u)]|\tag{Theorem \ref{main}}\\
&\leq \sum_{\substack{\mu \vdash n-k\\ \mu \subseteq \lambda}}[\frac{|\sum_{t=0}^{n-k}(-1)^{n-k-t}\sum_{D\in \varepsilon(\mu/(t))}\prod_{u\in D}h_{\mu}(u)|}{\prod_{u\in [\mu]}h_{\mu}(u)}\sum_{E\in \varepsilon(\lambda/\mu)}\prod_{u\in E}h_{\lambda}(u)]\nonumber\\
&=\sum_{\substack{\mu \vdash n-k\\ \mu \subseteq \lambda}}[\frac{|\eta_{\mu}(0)|}{\prod_{u\in [\mu]}h_{\mu}(u)}\sum_{E\in \varepsilon(\lambda/\mu)}\prod_{u\in E}h_{\lambda}(u)]\tag{Lemma \ref{ziro}}\\
&\leq \sum_{\substack{\mu \vdash n-k\\ \mu \subseteq \lambda}}[\frac{M_{k}(\lambda)}{\prod_{u\in [\mu]}h_{\mu}(u)}\sum_{E\in \varepsilon(\lambda/\mu)}\prod_{u\in E}h_{\lambda}(u)]\nonumber \\
&=\binom{n}{k}M_{k}(\lambda).\tag{Lemma \ref{good}}
\end{align}
b) Clearly, $\mathcal{F}(n,k)$ is vertex-transitive, so it is $|S(n,k)|$-regular and the largest eigenvalue of $\mathcal{F}(n,k)$ is $|S(n,k)|$. Let $\lambda \vdash n$. Then
\begin{align}
\eta_{\lambda}(k)&= \sum_{\substack{\mu \vdash n-k\\ \mu \subseteq \lambda}}[\frac{\sum_{t=0}^{n-k}(-1)^{n-k-t}\sum_{D\in \varepsilon(\mu/(t))}\prod_{u\in D}h_{\mu}(u)}{\prod_{u\in [\mu]}h_{\mu}(u)}\sum_{E\in \varepsilon(\lambda/\mu)}\prod_{u\in E}h_{\lambda}(u)]\tag{Theorem \ref{main}}\\
&=\sum_{\substack{\mu \vdash n-k\\ \mu \subseteq \lambda}}[\frac{\eta_{\mu}(0)}{\prod_{u\in [\mu]}h_{\mu}(u)}\sum_{E\in \varepsilon(\lambda/\mu)}\prod_{u\in E}h_{\lambda}(u)]\tag{Lemma \ref{ziro}}\\
&\geq \sum_{\substack{\mu \vdash n-k\\ \mu \subseteq \lambda}}[\frac{(\frac{-|S(n-k,0)|}{n-k-1})}{\prod_{u\in [\mu]}h_{\mu}(u)}\sum_{E\in \varepsilon(\lambda/\mu)}\prod_{u\in E}h_{\lambda}(u)]\tag{Lemma \ref{ku}}\\
&=\frac{-|S(n-k,0)|}{n-k-1}\sum_{\substack{\mu \vdash n-k\\ \mu \subseteq \lambda}}\frac{\sum_{E\in \varepsilon(\lambda/\mu)}\prod_{u\in E}h_{\lambda}(u)}{\prod_{u\in [\mu]}h_{\mu}(u)}\nonumber\\
&=\frac{-|S(n-k,0)|\binom{n}{k}}{n-k-1}\tag{Lemma \ref{good}}\\
&=\frac{-|S(n,k)|}{n-k-1}.\nonumber
\end{align}
c) Applying Theorems \ref{eigen} and \ref{main}, we are done.\\

\noindent d) Using part (c), it is easy to see that $\eta_{(1^n)}(k)=\frac{-|S(n,k)|}{n-k-1}$. Thus by part (b), $\eta:=\frac{-|S(n,k)|}{n-k-1}$ is the least eigenvalue of $\mathcal{F}(n,k)$.\\

\noindent e) Let $\mu \vdash n-k$ such that $\mu \subseteq \lambda$. We can see that $\varepsilon(\lambda/ \mu)=\{\mu\}$. Also it is clear that
$$\{\mu \vdash n-k|\;\mu \subseteq \lambda\}\subseteq\{\mu_s:=(s, 1^{n-k-s})|\;s\in \mathbb{N}_0,\;1\leq s \leq n-k\}, $$
where $(n-k)=(n-k,1^0)$ and $(1^{n-k})=(1,1^{n-k-1})$. Since $n-k-s \leq n-m$ and $s\leq min\{m,n-k\}$, we deduce that $m-k\leq s\leq min\{m,n-k\}$. Hence as $s$ is a positive integer, $max\{1,m-k\}\leq s\leq min\{m,n-k\}$.  Now using Theorem \ref{main}, we have:
\begin{align}
\eta_{\lambda}(k)&= \sum_{\mu \vdash n-k,\; \mu \subseteq \lambda}[\frac{\sum_{t=0}^{n-k}(-1)^{n-k-t}\sum_{D\in \varepsilon(\mu/(t))}\prod_{u\in D}h_{\mu}(u)}{\prod_{u\in [\mu]}h_{\mu}(u)}\sum_{E\in \varepsilon(\lambda/\mu)}\prod_{u\in E}h_{\lambda}(u)]\nonumber\\
&= \sum_{s=max\{1,m-k\}}^{min\{m,n-k\}}[\frac{\sum_{t=0}^{n-k}(-1)^{n-k-t}\sum_{D\in \varepsilon(\mu_s/(t))}\prod_{u\in D}h_{\mu_s}(u)}{\prod_{u\in [\mu_s]}h_{\mu_s}(u)}\prod_{u\in [\mu_s]}h_{\lambda}(u)]\nonumber
\end{align}
\begin{align}
&= \frac{(-1)^{n-k}}{(n-k)}\sum_{s=max\{1,m-k\}}^{min\{m,n-k\}}[\frac{1+\sum_{t=1}^{s}(-1)^t\prod_{u\in [(t)]}h_{\mu_s}(u)}{(s-1)!(n-k-s)!}\frac{n(m-1)!(n-m)!}{(m-s)!(k+s-m)!}]\nonumber\\
&= \frac{(-1)^{n-k}n(m-1)!(n-m)!}{n-k}\sum_{s=max\{1,m-k\}}^{min\{m,n-k\}}\nonumber\\
&\frac{1+(n-k)\sum_{t=1}^{s}\frac{(-1)^t(s-1)!}{(s-t)!}}{(s-1)!(n-k-s)!(m-s)!(s+k-m)!}\nonumber
\end{align}
Since
$$\frac{(-1)^{n-k}n(m-1)!(n-m)!}{n-k}=\frac{(-1)^{n-k}\binom{n}{k}(n-k-1)!k!}{\binom{n-1}{m-1}},$$
we have
\begin{align}
\eta_\lambda(k)&=\frac{(-1)^{n-k}\binom{n}{k}}{\binom{n-1}{m-1}}\sum_{s=max\{1,m-k\}}^{min\{m,n-k\}}[(1\nonumber\\
&+(n-k)\sum_{t=1}^{s}\frac{(-1)^t(s-1)!}{(s-t)!}){\binom{n-k-1}{s-1}}{\binom{k}{m-s}}]\nonumber\\
&=\frac{(-1)^{n-k}\binom{n}{k}}{\binom{n-1}{m-1}}\sum_{s=max\{1,m-k\}}^{min\{m,n-k\}}[(\frac{s}{n-k}+\sum_{t=1}^{s}(-1)^tt!\binom{s}{t}){\binom{n-k}{s}}{\binom{k}{m-s}}]\nonumber\\
&=\frac{(-1)^{n-k}\binom{n}{k}}{\binom{n-1}{m-1}}\sum_{s=max\{1,m-k\}}^{min\{m,n-k\}}[(\frac{s}{n-k}+(-1)^s|S(s,0)|-1){\binom{n-k}{s}}{\binom{k}{m-s}}]\nonumber\\
&=\frac{(-1)^{n-k}\binom{n}{k}}{\binom{n-1}{m-1}}\sum_{s=max\{1,m-k\}}^{min\{m,n-k\}}[((-1)^s|S(s,0)|-\frac{n-k-s}{n-k}){\binom{n-k}{s}}{\binom{k}{m-s}}]\nonumber
\end{align}
This completes the proof. \qed


\section*{Acknowledgements}
This research was supported in part
by a grant  from School of Mathematics, Institute for Research in Fundamental Sciences (IPM).


\end{document}